\let\oldsection\section
\renewcommand\section{\setcounter{equation}{0}\oldsection}
\newtheorem{theorem}{Theorem}[section]
\newtheorem{lemma}{Lemma}[section]
\newtheorem{definition}{Definition}[section]
\newtheorem{remark}{Remark}[section]
\date{October 4, 2014}
\begin{document}

\title[Uniqueness of weak solutions to Ericksen-Leslie]{On the uniqueness of weak solutions to the Ericksen-Leslie liquid crystal model in $\mathbb R^2$}

\author{Jinkai~Li}
\address[Jinkai~Li]{Department of Computer Science and Applied Mathematics, Weizmann Institute of Science, Rehovot 76100, Israel}
\email{jklimath@gmail.com}

\author{Edriss~S.~Titi}
\address[Edriss~S.~Titi]{
Department of Mathematics, Texas A\&M University, 3368 TAMU, College Station, TX 77843-3368, USA. ALSO, Department of Computer Science and Applied Mathematics, Weizmann Institute of Science, Rehovot 76100, Israel.}
\email{titi@math.tamu.edu and edriss.titi@weizmann.ac.il}

\author{Zhouping Xin}
\address[Zhouping Xin]{The Institute of Mathematical Sciences, The Chinese University of Hong Kong, Hong Kong}
\email{zpxin@ims.cuhk.edu.hk}

\subjclass[2010]{AMS 76D03, 35D30, 76A15.}
\keywords{uniqueness; weak solutions; Ericksen-Leslie system; liquid crystals.}

\allowdisplaybreaks
\begin{abstract}This paper concerns the uniqueness of weak solutions to the Cauchy problem to the Ericksen-Leslie system of
liquid crystal models in $\mathbb R^2$, with both general Leslie stress tensors and general Oseen-Frank density. It is shown here that such a system admits a unique weak solution provided that the Frank coefficients are close to some positive constant, which solves an interesting open
problem. One of the main ideas of our proof is to perform suitable energy estimates at level one order lower than the natural
basic energy estimates for the Ericksen-Leslie system.
\end{abstract}

\maketitle

\allowdisplaybreaks

\section{Introduction}

This paper is devoted to the study of the uniqueness of weak solutions to the two-dimensional Ericksen-Leslie system modeling the flow of the nematic liquid crystals.
The Ericksen-Leslie system is one of the most successful models for the nematic liquid crystals, which is formulated by Ericksen \cite{er1} and Leslie \cite{le2} in the 1960s.

In spite of the tremendous previous progress, the existence and uniqueness of global weak solutions to the general three-dimensional Ericksen-Leslie model are still open. Up to now, it is only for the two-dimensional case that the system has been proved to have global weak solutions, see, e.g., the works by Lin--Lin--Wang \cite{LinLinWang}, Hong \cite{Hong}, Hong--Xin \cite{HongXin}, Huang--Lin--Wang \cite{HuangLinWang} and Wang--Wang \cite{WangWang}. Concerning the uniqueness of weak solutions, only some special cases have been known, see Lin--Wang \cite{LinWang}, where they proved the uniqueness of the weak solutions
to the two-dimensional system of the following form
\begin{eqnarray*}
  &&\partial_tu+(u\cdot\nabla)u-\Delta u+\nabla p=-\text{div}\,(\nabla d\odot\nabla d),\\
  &&\text{div}\,u=0,\quad|d|=1,\\
  &&\partial_t d+(u\cdot\nabla )d =\Delta d+|\nabla d|^2d,
\end{eqnarray*}
where $\nabla d\odot\nabla d$ is a $2\times2$ matrix, with the $(i,j)$-th entry $\partial_id\cdot\partial_jd$, $i, j=1,2$. Note that the above system is a simplified model of the original Ericksen-Leslie system.

In this paper, we consider the general Ericksen-Leslie system (see \cite{er1,le2}) in $\mathbb R^2$:
\begin{eqnarray}
  &&\partial_tu+(u\cdot\nabla)u+\nabla p=\text{div}\,(\sigma^E+\sigma^L),\label{MAIN1}\\
  &&\text{div}\,u=0,\quad|d|=1,\label{MAIN2}\\
  &&\partial_td+(u\cdot\nabla)d+\left(\frac{\lambda_2}{\lambda_1}A-\Omega
  \right)\cdot d=-\frac{1}{\lambda_1}(h-(d\cdot h)d)+\frac{\lambda_2}{\lambda_1}(d\cdot A\cdot d)d,\label{MAIN3}
\end{eqnarray}
where the velocity field $u=(u^1,u^2)\in\mathbb R^2$, the orientation field $d=(d^1,d^2,d^3)\in S^2$ (the unit sphere in $\mathbb R^3$) and the pressure $p\in\mathbb R$ are the unknowns, $\lambda_1$ and $\lambda_2$ are two given constants, with $\lambda_1<0$. The notations $\sigma^E, \sigma^L$ and $h$ are the Ericksen stress tensor, the Leslie stress tensor and the molecular field, respectively, whose expressions will be clear below. Here and what follows, we denote by $A$ and $\Omega$ the symmetric and skew-symmetric parts of the tensor $\nabla u$, respectively, i.e.
$$
A=\frac{1}{2}(\nabla u+\nabla u^T),\quad\Omega=\frac{1}{2}(\nabla u-\nabla u^T).
$$

We consider the following general Oseen-Frank density
$$
W(d,\nabla d)=k_1(\text{div}\,d)^2+k_2(d\cdot\text{curl}\,d)^2+k_3|d\times\text{curl}\,d|^2,
$$
with three positive constants $k_i, i=1,2,3$, which are called Frank's coefficients. As in Hong-Xin \cite{HongXin}, without loss of generality, we can suppose that
\begin{equation}\label{OF}
W(d,\nabla d)=a|\nabla d|^2+V(d,\nabla d),
\end{equation}
with $a=\min\{k_1,k_2,k_3\}$, and
\begin{equation}\label{V}
V(d,\nabla d)=(k_1-a)(\text{div}\,d)^2+(k_2-a)(d\cdot\text{curl}\,d)^2+(k_3-a) |d\times\text{curl}\,d|^2.
\end{equation}
The Ericksen stress tensor $\sigma^E$ is given by
\begin{equation}
  \sigma^E=-(\nabla d)^T\frac{\partial W(d,\nabla d)}{\partial(\nabla d)},\label{sigmaE}
\end{equation}
where $(\nabla d)^T$ is the transposed matrix of $\nabla d$.
The Leslie stress tensor $\sigma^L$ has the form
\begin{align*}
  \sigma^L=&(\mu_1\hat d\otimes\hat d:A)\hat d\otimes\hat d+\mu_2\hat N\otimes\hat d+\mu_3\hat d\otimes\hat N\\
  &+\mu_4A+\mu_5(A\cdot\hat d)\otimes\hat d+\mu_6\hat d\otimes(A\cdot\hat d),
\end{align*}
with six constants $\mu_i, i=1,2,\cdots,6$, which are called Leslie's coefficients, where
$$
\hat d=(d^1, d^2),\quad\mbox{and}\quad\hat N=\partial_t\hat d+(u\cdot\nabla)\hat d-\Omega\cdot\hat d.
$$
The molecular field $h$ in (\ref{MAIN3}) is given by
$$
h=\text{div}\left(\frac{\partial W(d,\nabla d)}{\partial(\nabla d)}\right)- \frac{\partial W(d,\nabla d)}{\partial d}.
$$
The Leslie coefficients $\mu_i, i=1,2,\cdots,6$, and the constants $\lambda_1, \lambda_2$ satisfy the relations
\begin{eqnarray}
  &\lambda_1=\mu_2-\mu_3,\quad\lambda_2=\mu_5-\mu_6,\quad\mu_2+\mu_3=\mu_6-\mu_5, \label{mus}
\end{eqnarray}
where the last equality is called Parodi's relation.

Note that $A$ and $\Omega$ are $2\times2$ matrices, as pointed out in \cite{HuangLinWang}, the meanings of $A\cdot d$ and $\Omega\cdot d$  are understood in the following way
$$
A\cdot d=(A\cdot\hat d,0),\quad\Omega\cdot d=(\Omega\cdot\hat d,0),
$$
and consequently
$$
d\cdot A\cdot d=\hat d\cdot A\cdot\hat d,\quad d\cdot\Omega\cdot d=\hat d\cdot\Omega\cdot\hat d.
$$
Such understanding is natural when supposing that the Ericksen-Leslie system depends only on two spatial variables and the flow motion is in the plane. With these notations, equation (\ref{MAIN3}) can be rewritten in the component form as
\begin{eqnarray}
  &&\partial_t\hat d+(u\cdot\nabla)\hat d+\left(\frac{\lambda_2}{\lambda_1}A-\Omega
  \right)\cdot \hat d
  =-\frac{1}{\lambda_1}(\hat h-(d\cdot h)\hat d)+\frac{\lambda_2}{\lambda_1}(\hat d\cdot A\cdot \hat d)\hat d,\label{main3-1}\\
  &&\partial_td^3+u\cdot\nabla d^3=-\frac{1}{\lambda_1}(h^3-(d\cdot h)d^3)+\frac{\lambda_2}{\lambda_1}(\hat d\cdot A\cdot \hat d)d^3,\label{main3-2}
\end{eqnarray}
where $\hat h$ is the first two components of $h$, i.e. $\hat h=(h^1, h^2)$, and all terms in (\ref{main3-1})--(\ref{main3-2}) are now understood in the usual way.


Global existence of weak solutions to the Cauchy problem of (\ref{MAIN1})--(\ref{MAIN3}) in $\mathbb R^2$ has been proved in \cite{HongXin,HuangLinWang,WangWang}, but the uniqueness was not obtained there. The aim of this paper is to prove the uniqueness of weak solutions to the Cauchy problem of system (\ref{MAIN1})--(\ref{MAIN3}), and in particular, we will show that the weak solution established in \cite{HongXin,HuangLinWang,WangWang} is unique. Since we consider the system with general Leslie stress tensor and general Oseen-Frank density, the high order coupling terms, such as $\hat d\otimes\hat N$ and $A\cdot d$, which are as high as the leading terms, appear in (\ref{MAIN1}) and (\ref{MAIN3}), the semigroup method used in \cite{LinWang} for showing the uniqueness of weak solutions for a special case of the stress tensor and Oseen-Frank density does not apply to the current general case. Actually, as it will be seen below, we will use a completely different approach to prove the uniqueness from that developed in \cite{LinWang}.

For the Cauchy problem, we complement the Ericksen-Leslie system with the following initial condition
\begin{equation}
  \label{ic}
  (u, d)|_{t=0}=(u_0, d_0),
\end{equation}
such that
\begin{equation}
  \label{ass}
  u_0\in H,\quad d_0\in H_b^1,
\end{equation}
where $H$ and $H_b^1$ are the spaces of functions defined below.

In this paper, the spaces $\mathcal D(\mathbb R^2), H, V$ and $H_b^1$ are defined as
\begin{eqnarray*}
  &&\mathcal D(\mathbb R^2)=\{\varphi\in C_0^\infty(\mathbb R^2)\,|\,\text{div}\,\varphi=0\},\\
  &&H=\mbox{the closure of }\mathcal D(\mathbb R^2)\mbox{ in }L^2(\mathbb R^2),\\
  &&V=\mbox{the closure of }\mathcal D(\mathbb R^2)\mbox{ in }H^1(\mathbb R^2),\\
  &&H_b^k=\{d\,|\,d-b\in H^k(\mathbb R^2), |d|=1\},
\end{eqnarray*}
where $b$ is a given unit constant vector. For $1\leq q\leq\infty$, $\|\cdot\|_q$ denotes the $L^q(\mathbb R^2)$ norm. For convenience, we adopt the following notation
$$
\int fdx=\int_{\mathbb R^2}fdx.
$$

The definition of weak solutions to the Ericksen-Leslie system, subject to the initial condition (\ref{ic}), is given in the following

\begin{definition}
\label{def}
Given a positive time $T\in(0,\infty)$. A couple $(u,d)$ is called a weak solution to system (\ref{MAIN1})--(\ref{MAIN3}) in $\mathbb R^2\times(0,T)$, subject to the initial condition (\ref{ic}), if the following two statements hold

(i) $u\in C([0,T]; H)\cap L^2(0,T; V)$ and $d\in C([0,T]; H_b^1)\cap L^2(0,T;H_b^2)$,

(ii) for any test vector field $\varphi\in C_0^\infty(\mathbb R^2\times[0,T))$, with $\text{div}\,\varphi=0$, it holds that
\begin{align*}
\int_0^T\int [-u\cdot\partial_t\varphi+(\sigma^E+\sigma^L-u\otimes u):\nabla\varphi]dxdt=\int u_0(x) \varphi(x,0)dx,
\end{align*}
and for any test vector field $\phi\in C_0^\infty(\mathbb R^2\times[0,T))$, it holds that
\begin{align*}
-\frac{1}{\lambda_1}&\int_0^T\int
\left(\frac{\partial W(d,\nabla d)}{\partial(\nabla d)}:\nabla(\phi-(d\cdot\phi)d)+\frac{\partial W(d,\nabla d)}{\partial d}\cdot(\phi-(d\cdot\phi)d)\right) dxdt\\
&+\int_0^T\int\left[(u\cdot\nabla)d+\left(\frac{\lambda_2}{\lambda_1}A-\Omega \right)\cdot d-\frac{\lambda_2}{\lambda_1}(d\cdot A\cdot d)d\right]\cdot\phi  dxdt\\
=&\int_0^T\int d\cdot\partial_t\phi dxdt+\int d_0(x)\phi(x,0)dx.
\end{align*}
\end{definition}

\begin{remark}
  \label{rem1}
By the definition, the weak solution $(u,d)$ has the regularity
$$
(u,\nabla d)\in L^\infty(0,T;L^2)\cap L^2(0,T;H^1),
$$
and thus, by the Ladyzhenskaya inequality, one can see that $(u,\nabla d)\in L^4(\mathbb R^2\times(0,T))$. On account of this, using equation (\ref{MAIN3}), one can further show that
$\partial_t d\in L^2(0,T; L^2).$
This, together with $d\in L^2(0,T; H^2_b)$, implies that equation (\ref{MAIN3}) is actually satisfied a.e. in $\mathbb R^2\times(0,T)$.
\end{remark}

In addition to (\ref{mus}), we assume further that $\mu_i$, $i=1,2,\cdots,6,$ satisfy
\begin{equation}
  \mu_1-\frac{\lambda_2^2}{\lambda_1}\geq0,\quad\mu_4>0,\quad\mu_5+\mu_6\geq -\frac{\lambda_2^2}{\lambda_1},\label{assiad}
\end{equation}
though some weaker assumption than this is sufficient, see Remark \ref{rem2} below.

Next, we state our main result.

\begin{theorem}
  \label{theorem}
Let $T\in(0,\infty)$ be any given positive time. Suppose that the Leslie coefficients $\mu_i, i=1,2,\cdots,6,$ satisfy (\ref{mus}) and (\ref{assiad}). Set $a=\min\{k_1,k_2,k_3\}$ and $\delta=\max\{k_1-a, k_2-a,k_3-a\}$. Then, there exists an absolute positive constant $C_0$, such that, if
$$
\delta\leq \delta_0:=\min\left\{1,\frac{|\lambda_1|}{|\lambda_1| +|\lambda_2|}\sqrt{\frac{\mu_4}{-2\lambda_1}}\right\}\frac{a}{C_0},
$$
then, for any initial data $(u_0, d_0)$ satisfying (\ref{ass}), the Ericksen-Leslie system (\ref{MAIN1})--(\ref{MAIN3}) on $\mathbb R^2\times(0,T)$, subject to the initial condition (\ref{ic}), has a unique weak solution, which depends continuously on the initial data.
\end{theorem}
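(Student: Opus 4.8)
The plan is a difference/energy argument, carried out — and this is the crucial twist — in a space one derivative below the natural energy space: $\dot H^{-1}$ for the velocity (with the customary care for negative-order spaces on the whole plane) and $L^2$ for the director. Existence being already provided by \cite{HongXin,HuangLinWang,WangWang}, I focus on uniqueness; continuous dependence will come from the same computation with distinct data. Let $(u_1,d_1)$ and $(u_2,d_2)$ be two weak solutions on $[0,T]$ with the same data, and set $u=u_1-u_2$, $d=d_1-d_2$, $p=p_1-p_2$. First I would record what Definition~\ref{def} and Remark~\ref{rem1} give for each solution: $(u_i,\nabla d_i)\in L^\infty(0,T;L^2)\cap L^2(0,T;H^1)\cap L^4(\mathbb R^2\times(0,T))$, $\partial_td_i\in L^2(0,T;L^2)$, $d_i\in L^2(0,T;H^2_b)$, and $|d_i|=1$; I would also use the identity $d\cdot d_1=\tfrac12|d|^2$, a consequence of $|d_1|=|d_2|=1$, which is what tames the constraint terms $(d\cdot h)d$ at the $L^2$ level.

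Next I would isolate the dissipative parts of the difference system: $\text{div}\,(\mu_4A)=\tfrac{\mu_4}{2}\Delta u$ in (\ref{MAIN1}), and the $2a\Delta d$ coming from the $a|\nabla d|^2$ part of $W$ in (\ref{MAIN3}). Testing the $u$-equation against $(-\Delta)^{-1}u$ — legitimate, since this field is divergence free and, after a mollification/truncation approximation, admissible in the weak formulation given the available regularity — yields $\tfrac12\tfrac{d}{dt}\|u\|_{\dot H^{-1}}^2+\tfrac{\mu_4}{2}\|u\|_2^2$ on the left, while testing the $d$-equation against $d$ yields $\tfrac12\tfrac{d}{dt}\|d\|_2^2+\tfrac{2a}{|\lambda_1|}\|\nabla d\|_2^2$, up to the projection terms $(d\cdot h)d$ and the $V$-part of $W$, both handled below. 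The point of working one order down is that $\|u\|_2$ and $\|\nabla d\|_2$ now appear on the dissipative side — precisely the norms that the leading-order couplings $\text{div}\,(\nabla d\odot\nabla d)$, $\hat d\otimes\hat N$, $A\cdot d$ would otherwise force onto the bad side — so the derivative counting on the right-hand side closes.

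I would then estimate the right-hand sides, where three types of terms occur. (1) The top-order Leslie terms (the part of $\text{div}\,\sigma^L$ other than $\mu_4A$ paired with $(-\Delta)^{-1}u$, together with the $(\tfrac{\lambda_2}{\lambda_1}A-\Omega)\cdot d$, $\tfrac{\lambda_2}{\lambda_1}(d\cdot A\cdot d)d$ and $h$-type terms paired with $d$): after integration by parts these recombine and, thanks to $\lambda_1=\mu_2-\mu_3$, $\lambda_2=\mu_5-\mu_6$, Parodi's relation and (\ref{assiad}), their genuinely leading part assembles into a nonnegative quadratic form — the difference-level analogue of the Ericksen--Leslie energy identity — so it is discarded or absorbed into $\tfrac{\mu_4}{2}\|u\|_2^2$. (2) The anisotropic Oseen--Frank terms generated by $V(d,\nabla d)$ in $\sigma^E$ and in $h$: these do not cancel structurally, but each carries a factor bounded by $\delta$; by Ladyzhenskaya's inequality $\|f\|_4^2\le C\|f\|_2\|\nabla f\|_2$ in $\mathbb R^2$ they are controlled by $C\delta(\|\nabla d\|_2^2+\|u\|_2^2)$ plus lower-order terms, which is exactly why the threshold $\delta\le\delta_0$, with its precise dependence on $\mu_4$, $|\lambda_1|$, $|\lambda_2|$, $a$, is imposed: it leaves a fixed positive fraction of the dissipation intact. (3) The genuinely lower-order terms — the transport terms, the $\nabla d\odot\nabla d$ piece from $a|\nabla d|^2$ after one integration by parts, the projection terms, and the $\partial_td$ pieces of $\hat N$ — are bounded via H\"older, Ladyzhenskaya and the boundedness of $\nabla(-\Delta)^{-1}$ by $g(t)\big(\|u\|_{\dot H^{-1}}^2+\|d\|_2^2\big)$ plus a small multiple of the dissipation, where $g\in L^1(0,T)$ is built from $\|(u_i,\nabla d_i)\|_4^2$, $\|\nabla^2d_i\|_2^2$ and $\|\partial_td_i\|_2^2$. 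Collecting everything gives
\[
\frac{d}{dt}\Big(\|u\|_{\dot H^{-1}}^2+\|d\|_2^2\Big)+c_0\Big(\|u\|_2^2+\|\nabla d\|_2^2\Big)\le g(t)\Big(\|u\|_{\dot H^{-1}}^2+\|d\|_2^2\Big),
\]
with $c_0>0$ and $g\in L^1(0,T)$; since the difference vanishes at $t=0$, Gronwall's inequality forces $u\equiv0$ and $d\equiv0$, and the same inequality run with distinct data yields continuous dependence in the $\dot H^{-1}\times L^2$ topology.

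The main obstacle I anticipate is twofold. First, the bookkeeping in steps (1)--(3): one must extract from $\sigma^E$, $\sigma^L$ and $h$ exactly the pieces that are of leading order, show they either cancel (through Parodi's relation and (\ref{assiad})) or carry the small factor $\delta$, and then verify that the total amount of $\tfrac{\mu_4}{2}\|u\|_2^2+\tfrac{2a}{|\lambda_1|}\|\nabla d\|_2^2$ consumed by the $\delta$-terms and by the lower-order terms stays strictly below the full dissipation — it is this accounting that produces the explicit $\delta_0=\min\{1,\tfrac{|\lambda_1|}{|\lambda_1|+|\lambda_2|}\sqrt{\mu_4/(-2\lambda_1)}\}\,a/C_0$. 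Second, the rigorous justification that $(-\Delta)^{-1}u$ may be used as a test function and that every pairing — in particular those involving $\partial_td$ through $\hat N$ — is well defined at the stated regularity, which needs a careful approximation argument since $(-\Delta)^{-1}u$ is neither compactly supported nor smooth.
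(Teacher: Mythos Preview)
Your strategy is exactly the paper's: estimate the difference of two solutions one order below the basic energy, pair the velocity equation with an inverse-Laplacian of $u$ and the director equation with $d$, exploit the Parodi-type structure for the top-order Leslie couplings, control the Oseen--Frank anisotropy by the small parameter $\delta$, and close by Gronwall. Two points where the paper's execution sharpens yours are worth recording.

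First, the paper takes $\xi=(I-\Delta)^{-1}u$ rather than $(-\Delta)^{-1}u$. On $\mathbb R^2$ this is not cosmetic: $\dot H^{-1}(\mathbb R^2)$ is not a space of distributions and $(-\Delta)^{-1}u$ need not make sense for a generic $u\in L^2$, whereas the Bessel potential immediately gives $\xi\in L^2(0,T;H^3)$ and $\partial_t\xi\in L^2(0,T;H^1)$, so the pairing $\int(\xi-\Delta\xi)\cdot\partial_t\xi\,dx$ and all integrations by parts are rigorous without any approximation argument. This is precisely the clean resolution of the second obstacle you anticipate in your last paragraph; the price is only that $A=-\Delta S+S$ and $\Omega=-\Delta Q+Q$ carry a harmless extra lower-order piece.

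Second, in your step~(1) the leading Leslie cross-terms do not simply ``assemble into a nonnegative quadratic form''. The quadratic form $\mathcal Q(d_1,\cdot,\cdot)$ handles only the $A$-$A$ part; the genuinely dangerous $A$-$\Delta d$ couplings are dealt with by an exact \emph{inter-equation} cancellation. Lemma~\ref{lem1} deliberately keeps the two terms
\[
-\frac{2a\lambda_2}{\lambda_1}\int(d\cdot d_1)\,\hat d_1\otimes\hat d_1:\Delta S\,dx
\quad\text{and}\quad
2a\int\hat d\cdot\Big(\frac{\lambda_2}{\lambda_1}\Delta S-\Delta Q\Big)\cdot\hat d_1\,dx
\]
on the left of the $\xi$-estimate, and testing (\ref{MAIN3}) by $2ad$ produces on the left the matching terms with $A,\Omega$ in place of $-\Delta S,-\Delta Q$. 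Adding the two inequalities and using $A=-\Delta S+S$, $\Omega=-\Delta Q+Q$ cancels the top-order pieces and leaves only residues involving $S,Q$ (i.e.\ $\nabla\xi$), which are lower order. It is this cancellation between the two tested equations, not a sign condition, that closes step~(1); your proposal has the right instinct but the mechanism should be stated as a cancellation rather than as positivity.
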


As we mentioned before, the existence of weak solutions to system (\ref{MAIN1})--(\ref{MAIN3}), subject to the initial condition (\ref{ic}), has been proven in \cite{HongXin,HuangLinWang,WangWang}. Moreover, the solutions established there are smooth away from at most finite many singular times. Thus, to prove Theorem \ref{theorem}, it suffices to show the uniqueness and continuous dependence on the initial data.

A usual way for proving uniqueness of solutions is to consider the difference between two solutions and then obtain some energy estimates for the resulting system of the difference at the level of the basic natural energy of the system. One may also try to use this approach to prove the uniqueness of weak solutions to the Ericksen-Leslie system. Noticing that the basic natural energy identity for a solution $(u, d)$ to the Ericksen-Leslie system (\ref{MAIN1})--(\ref{MAIN3}) is
\begin{equation*}
  \frac{1}{2}\frac{d}{dt}\int(|u|^2+|\nabla d|^2)dx+\int(\mathcal Q(d,A,A)-\frac{1}{\lambda_1}|h-(d\cdot h)d|^2)=0, 
\end{equation*}
where
\begin{equation}\label{Q}
\mathcal Q(d,A,A)=\left(\mu_1-\frac{\lambda_2^2}{\lambda_1}\right)(d\cdot A\cdot d)^2+\mu_4|A|^2+\left(\mu_5+\mu_6+\frac{\lambda_2^2}{\lambda_1}\right)(A\cdot d)^2,
\end{equation}
one may test the momentum equation of the resulting difference system by $u$ (of course, $u$ now denotes the difference of two velocity fields) and the director equation by $-\Delta d$ ($d$ now is the difference of two director fields). Unfortunately, by following this approach, one will encounter a term like $\int|\nabla d_1|^2d\cdot\Delta ddx$, which cannot be controlled by using the embedding inequalities.

Therefore, it does not seem to be suitable to perform the energy estimates at the level of the basic natural energy of the system. Instead, to obtain the uniqueness, our strategy is to do the estimates at the level of one order lower than the basic energy. Roughly speaking, we will perform the $H^{-1}$ norm estimates on the velocity and the $L^2$ norm estimates on the director. The obvious advantage of doing so is that the basic energy automatically provides high order \textit{a priori} estimates (compared with the lower order energy). Keeping this in mind, we introduce the vector field $\xi$ associated with $u$ (the difference of two velocity fields) as
$$
\xi=(-\Delta+I)^{-1}u,
$$
and test the momentum and director equations of the difference system against $\xi$ and $d$ (the difference of two director fields),
respectively. As a result, after some careful, but standard analysis, and making full use the cancellation properties of the coupled terms between the velocity and the director fields, we can successfully derive an energy inequality, which guarantees the uniqueness of weak solutions to the Ericksen-Leslie system. Notably, a similar idea has been used by Larios--Lunasin--Titi in \cite{LarLunTiti} to prove the global well-posedness for the 2D Boussinesq system with anisotropic viscosity and without heat diffusion. Finally, we mention a relevant paper by Constantin--Sun \cite{CONSUN}, where the Lagrange approach was used to prove the uniqueness of solutions to Oldroyd-B and related complex fluid models.

\begin{remark}\label{rem2}
(i) Theorem \ref{theorem} implies that the weak solutions established in \cite{HuangLinWang} are unique. Theorem \ref{theorem} also implies that the weak solutions obtained in \cite{HongXin,WangWang} are unique, as long as the Frank coefficients $k_i, i=1,2,3,$ are close enough to each other.

(ii) The conclusion still holds if we replace (\ref{assiad}) by the following slightly weaker assumption
$$
\mu:=\min\left\{\mu_4, \mu_1+\mu_4+\mu_5+\mu_6, \mu_4+\mu_5+\mu_6+\frac{\lambda_2^2}{\lambda_1}\right\}
>0.
$$
This generalization relies on the fact that $\mathcal Q(d,A,A)$, defined by (\ref{Q}), satisfies
$$
\mathcal Q(d,A,A)\geq\mu|A|^2.
$$
Since this slightly weaker generalization is not the main task of the present paper, we will still use the assumption (\ref{assiad}) throughout this paper.
\end{remark}

At the end of this section, we give some equivalent expressions of the molecular field $h$ and the Leslie stress tensor $\sigma^L$, which will be used throughout this paper. As it will be shown below, we write $h$ and $\sigma^L$ for the general case as perturbations of those for a special case.

Due to the expression of the Oseen-Frank density function $W(d,\nabla d)$, (\ref{OF}), then straightforward calculations yield
\begin{equation}
  h=2a\Delta d+H,\label{h}
\end{equation}
with
\begin{equation}
  H=\text{div}\,\left(\frac{\partial V(d,\nabla d)}{\partial(\nabla d)}\right)-\frac{\partial V(d,\nabla d)}{\partial d}.\label{H}
\end{equation}
Since $|d|=1$, it has $\Delta d\cdot d=-|\nabla d|^2$, and as a result we have
\begin{equation}
  h-(d\cdot h)d=2a(\Delta d+|\nabla d|^2d)+H-(d\cdot H)d. \label{h-}
\end{equation}
The terms $\Delta d$ and $\Delta d+|\nabla d|^2d$ can be viewed as the main parts of $h$ and of $h-(d\cdot h)d$, respectively, while the remaining terms will be considered as the perturbations of these main parts.

It follows from (\ref{h-}), (\ref{mus}) and (\ref{main3-1}) that the Leslie stress tensor $\sigma^L$ can be rewritten as the following equivalent form
\begin{align}
  \sigma^L=&\left(\mu_1-\frac{\lambda_2^2}{\lambda_1}\right)(\hat d\cdot A\cdot\hat d)\hat d\otimes\hat d+\mu_4A\nonumber\\
  &+\left(\mu_5-\frac{\lambda_2}{\lambda_1}\mu_2\right)(A\cdot\hat d)\otimes\hat d+\left(\mu_6-\frac{\lambda_2}{\lambda_1}\mu_3\right)\hat d\otimes(A\cdot\hat d)\nonumber\\
  &-\frac{1}{\lambda_1}[\mu_2(\hat h-(d\cdot h)\hat d)\otimes\hat d+\mu_3\hat d\otimes(\hat h-(d\cdot h)\hat d)]\nonumber\\
  =:&\Sigma^L+\Pi^L,\label{E0}
\end{align}
where $\Sigma^L$ and $\Pi^L$ are given by
\begin{align}
  \Sigma^L=&\left(\mu_1-\frac{\lambda_2^2}{\lambda_1}\right)(\hat d\cdot A\cdot\hat d)\hat d\otimes\hat d+\mu_4A\nonumber\\
  &+\left(\mu_5-\frac{\lambda_2}{\lambda_1}\mu_2\right)(A\cdot\hat d)\otimes\hat d+\left(\mu_6-\frac{\lambda_2}{\lambda_1}\mu_3\right)\hat d\otimes(A\cdot\hat d)\nonumber\\
  &-\frac{2a}{\lambda_1}(\mu_2\Delta\hat d\otimes\hat d+\mu_3\hat d\otimes\Delta\hat d)-\frac{2a\lambda_2}{\lambda_1}(\Delta d\cdot d)\hat d\otimes\hat d,\label{E1}
\end{align}
where, in the last term, we have use $d\cdot\Delta d=-|\nabla d|^2$, guaranteed by $|d|=1$, and
\begin{align}
  \Pi^L=&-\frac{1}{\lambda_1}[\mu_2(\hat H-(d\cdot H)\hat d)\otimes\hat d+\mu_3\hat d\otimes(\hat H-(d\cdot H)\hat d)]\nonumber\\
  =&-\frac{1}{\lambda_1}[\mu_2\hat H\otimes\hat d+\mu_3\hat d\otimes\hat H+\lambda_2(d\cdot H)\hat d\otimes\hat d],\label{E2}
\end{align}
respectively, where $H$ is given by (\ref{H}), and $\hat H$ is the first two components of $H$, i.e. $\hat H=(H^1, H^2)$. As it will be shown in the proof of Theorem \ref{theorem} in the next section, $\Sigma^L$ will be the main part of $\sigma^L$, while $\Pi^L$ is the perturbation part.

\section{Proof of Theorem \ref{theorem}}
Let $T\in(0,\infty)$ be any given positive time. Let $(u_1, d_1)$ and $(u_2,d_2)$ be two weak solutions to system (\ref{MAIN1})--(\ref{MAIN3}) on $\mathbb R^2\times(0,T)$. Let $A_i$ and $\Omega_i$ be the symmetric and skew-symmetric parts of $\nabla u_i$, respectively. Let $H_i$, $\sigma_i^L, \Sigma^L_i$ and $\Pi_i^L$ be the corresponding quantities associated to $(u_i, d_i)$, $i=1,2$, given by (\ref{H}), (\ref{E0}), (\ref{E1}) and (\ref{E2}), respectively. Set $u=u_1-u_2$ and $d=d_1-d_2$, and let $A$ and $\Omega$ be the symmetric and skew-symmetric parts of $\nabla u$, respectively.

Define the vector fields $\xi_i=(-\Delta+I)^{-1}u_i$, in other words, for $i=1,2$, $\xi_i$ is the unique solution to
\begin{equation}
  -\Delta\xi_i+\xi_i=u_i,\quad \xi_i\rightarrow0,\mbox{ as }x\rightarrow\infty.  \label{xi}
\end{equation}
Recalling that $\text{div}\,u_i=0$, it follows that
\begin{equation}
  \label{divfree}
  \text{div}\,\xi_i=\text{div}\,(-\Delta+I)^{-1}u_i= (-\Delta+I)^{-1}\text{div}\,u_i=0.
\end{equation}

Setting $\xi=\xi_1-\xi_2$, and denote by $S$ and $Q$ the symmetric and skew-symmetric parts of $\nabla\xi$, respectively. Then, it is clear that
\begin{equation}\label{AOMEGA}
  A=-\Delta S+S, \quad\Omega=-\Delta Q+Q.
\end{equation}

Integration by parts and using the fact that $\text{div}\,\xi=0$, one can check easily that
\begin{equation}
  \int|S|^2dx=\frac{1}{2}\int|\nabla\xi|^2dx,\quad\int|\nabla S|^2dx=\frac{1}{2}\int|\nabla^2\xi|^2dx.\label{int}
\end{equation}

Before giving the proof of Theorem \ref{theorem}, we state the following two lemmas, where the proof of the first lemma, Lemma \ref{lem1}, will be given in the Appendix.

\begin{lemma}
  \label{lem1}
There exists a positive constant $C$ such that the following inequality holds
\begin{align*}
\int(\Sigma_1^L-\Sigma_2^L):\nabla\xi dx
  \geq&\frac{\mu_4}{4}\int(|\nabla\xi|^2+|\nabla^2\xi|^2)dx -\frac{2a\lambda_{2}}{\lambda_1}\int(d\cdot d_1)\hat d_1\otimes\hat d_1:\Delta Sdx\nonumber\\
  &-C\int(|\Delta d_1|+|\nabla u_2|+|\Delta d_2|)(|d|^2 +|\nabla\xi|^2)dx\nonumber\\
  &+2a\int\hat d\cdot\left(\frac{\lambda_2}{\lambda_1} \Delta S- \Delta Q\right)\cdot\hat d_1dx.
\end{align*}
\end{lemma}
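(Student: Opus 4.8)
The plan is to expand $\Sigma_1^L-\Sigma_2^L$ term by term according to (\ref{E1}), contract with $\nabla\xi$, and estimate each resulting group separately; in each group the difference is split by telescoping so that exactly one summand carries the velocity difference $A=A_1-A_2$ while the others carry the director difference $d$ (equivalently $\hat d$, $\nabla d$, or $\Delta d$), with every ``frozen'' factor being one of $A_2=\nabla u_2$, $\hat d_1$, $\hat d_2$, $\Delta d_1$, $\Delta d_2$; this is what produces the density $|\Delta d_1|+|\nabla u_2|+|\Delta d_2|$ in the statement, the key point being that the velocity difference is always extracted as $A$ (never leaving a $\nabla u_1$ frozen) and rewritten via (\ref{AOMEGA}) as $A=-\Delta S+S$, trading one derivative of $u$ for two of $\xi$. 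For the leading term, since $A$ is symmetric and $\nabla\xi=S+Q$ with $Q$ skew, $\mu_4\int A:\nabla\xi\,dx=\mu_4\int A:S\,dx=\mu_4\int(-\Delta S+S):S\,dx=\mu_4\int(|\nabla S|^2+|S|^2)\,dx=\frac{\mu_4}{2}\int(|\nabla^2\xi|^2+|\nabla\xi|^2)\,dx$ by (\ref{int}); I retain $\frac{\mu_4}{4}\int(|\nabla^2\xi|^2+|\nabla\xi|^2)$ and keep the other $\frac{\mu_4}{4}$ in reserve to absorb error terms.

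For the quadratic term $(\mu_1-\frac{\lambda_2^2}{\lambda_1})(\hat d\cdot A\cdot\hat d)\hat d\otimes\hat d$ and the two shear terms $(\mu_5-\frac{\lambda_2}{\lambda_1}\mu_2)(A\cdot\hat d)\otimes\hat d$, $(\mu_6-\frac{\lambda_2}{\lambda_1}\mu_3)\hat d\otimes(A\cdot\hat d)$, note first that (\ref{mus}) gives $\mu_5-\frac{\lambda_2}{\lambda_1}\mu_2=\mu_6-\frac{\lambda_2}{\lambda_1}\mu_3=:\nu$ with $2\nu=\mu_5+\mu_6+\frac{\lambda_2^2}{\lambda_1}\ge0$, while $\mu_1-\frac{\lambda_2^2}{\lambda_1}\ge0$, both from (\ref{assiad}). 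In the velocity-difference summands substitute $A=-\Delta S+S$: since $\hat d_i\otimes\hat d_i$ and $(S\hat d_i)\otimes\hat d_i+\hat d_i\otimes(S\hat d_i)$ are symmetric, $Q$ (and $\Delta Q$) drops out, and the $S$-part of each summand is a sign-definite multiple of $\int(\hat d_i\cdot S\cdot\hat d_i)^2$ or $\int|S\hat d_i|^2$, hence $\ge0$ and discardable; the $-\Delta S$-part is integrated by parts, leaving an integrand with at most one spatial derivative of $\hat d_i$, which by Cauchy--Schwarz, the Ladyzhenskaya inequality, $|d_i|=1$, the interpolation $\|\nabla d_i\|_4\le C\|\nabla d_i\|_2^{1/2}\|\nabla^2 d_i\|_2^{1/2}$ and Young's inequality is bounded by $\epsilon\int(|\nabla\xi|^2+|\nabla^2\xi|^2)\,dx+C\int(|\Delta d_1|+|\Delta d_2|)(|d|^2+|\nabla\xi|^2)\,dx$. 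The director-difference summands of these terms carry a factor $\hat d$ and a frozen $A_2=\nabla u_2$, so Cauchy--Schwarz and Young give $C\int|\nabla u_2|(|d|^2+|\nabla\xi|^2)\,dx$ directly.

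For the terms of $\Sigma^L$ containing $\Delta d$, I first use Parodi's relation $\mu_2+\mu_3=-\lambda_2$ from (\ref{mus}) to rewrite the last two lines of (\ref{E1}) as $-\frac{2a}{\lambda_1}\big[\mu_2\,\widehat{(\Delta d_i+|\nabla d_i|^2 d_i)}\otimes\hat d_i+\mu_3\,\hat d_i\otimes\widehat{(\Delta d_i+|\nabla d_i|^2 d_i)}\big]$, then telescope; wherever the factor $\Delta d=\Delta d_1-\Delta d_2$ (which is only in $H^{-1}$) appears it is integrated by parts against $\nabla\xi$, recalling from (\ref{AOMEGA}) that $A=-\Delta S+S$, $\Omega=-\Delta Q+Q$. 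The bookkeeping of these integrations by parts, together with the identity $d\cdot d_1=\frac12|d|^2$ (valid since $|d_1|=|d_2|=1$), isolates exactly the two summands $-\frac{2a\lambda_2}{\lambda_1}\int(d\cdot d_1)\hat d_1\otimes\hat d_1:\Delta S\,dx$ and $2a\int\hat d\cdot(\frac{\lambda_2}{\lambda_1}\Delta S-\Delta Q)\cdot\hat d_1\,dx$ of the statement; these are left intact because in the proof of Theorem \ref{theorem} they cancel against matching terms produced when the director equation of the difference system is tested against $d$, while every other summand reduces, again by Cauchy--Schwarz, Ladyzhenskaya, $|d_i|=1$ and Young, to $\epsilon\int(|\nabla\xi|^2+|\nabla^2\xi|^2)\,dx+C\int(|\Delta d_1|+|\nabla u_2|+|\Delta d_2|)(|d|^2+|\nabla\xi|^2)\,dx$. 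Collecting everything, choosing $\epsilon$ small (depending only on $\mu_4$ and absolute constants) so that the sum of all $\epsilon$-terms is at most $\frac{\mu_4}{4}\int(|\nabla\xi|^2+|\nabla^2\xi|^2)$, and discarding the nonnegative quantities, yields the claimed inequality.

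The main obstacle is the combinatorial bookkeeping of the last two steps: organizing the telescopings and integrations by parts so that (i) no $\nabla u_1$ is ever left frozen (forcing the velocity difference always to pass through $A=-\Delta S+S$), (ii) the $\Delta d$- and $\Delta S$-factors are always integrated by parts before being estimated, and the resulting remainders — in particular those coming from the $\Delta d$-difference terms — are controlled, via the a priori energy bounds on $(u_i,d_i)$ and the Ladyzhenskaya inequality, in exactly the form $\epsilon\int(|\nabla\xi|^2+|\nabla^2\xi|^2)+C\int(|\Delta d_1|+|\nabla u_2|+|\Delta d_2|)(|d|^2+|\nabla\xi|^2)$, and (iii) the only summands that resist absorption are precisely the two the statement retains — which requires anticipating the cancellation structure of the full energy estimate for Theorem \ref{theorem}.
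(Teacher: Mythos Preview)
Your plan is correct and mirrors the paper's Appendix proof almost exactly: split $\Sigma^L$ into the $A$-block, the $\Delta\hat d$-block, and the $(\Delta d\cdot d)$-block, telescope each difference so that the velocity difference always enters as $A=-\Delta S+S$, integrate by parts, discard the nonnegative contributions coming from $\alpha=\mu_1-\lambda_2^2/\lambda_1\ge0$ and $\beta=\mu_5+\mu_6+\lambda_2^2/\lambda_1\ge0$, and retain precisely the two summands that later cancel against the director estimate. Two small corrections: (i) the cross terms of type $\int|\nabla d_1|\,|\nabla\xi|\,|\nabla^2\xi|\,dx$ are reduced to the stated form $\int|\Delta d_1|\,|\nabla\xi|^2\,dx$ via Young's inequality together with the \emph{pointwise} bound $|\nabla d_1|^2=-d_1\cdot\Delta d_1\le|\Delta d_1|$ (a direct consequence of $|d_1|=1$), not via Ladyzhenskaya, which would only produce a product of global norms rather than the local integral in the lemma; (ii) $\Delta d\in L^2$ (not merely $H^{-1}$) for weak solutions, so all pairings are honest integrals --- the integration by parts is still the right move, but its purpose is to shift both derivatives onto $S,Q$ so as to manufacture the $\Delta S$ and $\Delta Q$ appearing in the two retained terms.
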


\begin{lemma}
  \label{lem2}
The following estimates hold true
\begin{align*}
   \left|\int(\Pi_1^L-\Pi_2^L):\nabla\xi dx\right|
  \leq& C_1\delta\left(\left|\frac{\lambda_2}{\lambda_1}\right|+1\right)
  \int|\nabla d||\nabla^2\xi|dx+\varepsilon\int(|\nabla d|^2+|\nabla^2\xi|^2)dx\\
  &+C_\varepsilon\int(|\nabla^2d_1|+|\nabla^2d_2|)(|d|^2 +|\nabla\xi|^2)dx,
\end{align*}
and
\begin{align*}
  \left|\int(\sigma_1^E-\sigma_2^E):\nabla\xi dx\right|\leq&
  C_\varepsilon\int(|\nabla^2d_1|+|\nabla^2d_2|)(|d|^2+|\nabla\xi|^2)dx\\
  &+\varepsilon\int(|\nabla d|^2+|\nabla^2\xi|^2)dx,
\end{align*}
for any positive number $\varepsilon$, where $\delta=\max\{k_1-a, k_2-a, k_3-a\},$ and $C_1$ is an absolute positive constant.
\end{lemma}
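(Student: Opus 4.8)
The plan is to read both inequalities as perturbation bounds organized around the splitting $W=a|\nabla d|^2+V$ of (\ref{OF}), so that the gradient nonlinearities that are genuinely dangerous are carried entirely by $V$, hence by the small coefficient $\delta$. From (\ref{V}) and (\ref{H}), $V$ is a quadratic form in $\nabla d$ whose coefficients are polynomials in $d$; since $|d|=1$ these coefficients and their first $d$-derivatives are bounded on $S^2$, and each carries a factor $k_i-a\in[0,\delta]$. Hence $H=H(d,\nabla d,\nabla^2 d)$ is affine in $\nabla^2 d$, schematically $H=\delta\,\mathbb B(d):\nabla^2 d+\delta\,\mathbb C(d):(\nabla d\otimes\nabla d)$ with $\mathbb B,\mathbb C$ bounded and Lipschitz in $d$, and the same holds for $\hat H$ and $d\cdot H$ appearing in $\Pi^L$, see (\ref{E2}). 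For the Ericksen tensor I use $\sigma^E=-2a(\nabla d)^T\nabla d-(\nabla d)^T\,\partial V/\partial(\nabla d)$, which involves no second derivatives, its $V$-part being $\delta$-small with bounded $d$-dependent coefficients.

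Next I would form the differences by bilinearity and the Lipschitz dependence on $d$: e.g. $\hat H_1\otimes\hat d_1-\hat H_2\otimes\hat d_2=\hat H\otimes\hat d_1+\hat H_2\otimes\hat d$ with $\hat H=\hat H_1-\hat H_2$ and $\hat d=\hat d_1-\hat d_2$, then $\hat H=\delta\,\mathbb B(d_1):\nabla^2 d+\delta(\mathbb B(d_1)-\mathbb B(d_2)):\nabla^2 d_2+\cdots$ with $\mathbb B(d_1)-\mathbb B(d_2)=O(|d|)$, and likewise $(\nabla d_1)^T\nabla d_1-(\nabla d_2)^T\nabla d_2=(\nabla d)^T\nabla d_1+(\nabla d_2)^T\nabla d$ in $\sigma^E$. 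Plugging into $\int(\Pi_1^L-\Pi_2^L):\nabla\xi\,dx$, the only contributions containing $\nabla^2 d$ of the \emph{difference} have the form $\delta$ times a bounded function of $(d_1,d_2)$ times a contraction $\nabla^2 d:(\hat d_j\otimes\nabla\xi)$; on each of them I integrate by parts once, moving the extra derivative off $\nabla^2 d$. When it lands on $\xi$, this produces $\delta\int|\nabla d|\,|\nabla^2\xi|\,dx$ — precisely the first term on the right-hand side, the constant $C_1(|\lambda_2/\lambda_1|+1)$ being the sum of the coefficients $|\mu_2/\lambda_1|,|\mu_3/\lambda_1|,|\lambda_2/\lambda_1|$ in (\ref{E2}), where by Parodi's relation (\ref{mus}) one has $|\mu_2/\lambda_1|,|\mu_3/\lambda_1|\le\tfrac12(1+|\lambda_2/\lambda_1|)$. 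When it lands on a $d_1$-dependent coefficient it produces a remainder $\lesssim\delta\int|\nabla d|\,|\nabla d_1|\,|\nabla\xi|\,dx$.

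All remaining terms in both estimates no longer contain $\nabla^2 d$ of the difference and are dispatched by H\"older's inequality and the Ladyzhenskaya inequality $\|f\|_4^2\le C\|f\|_2\|\nabla f\|_2$, together with the weak-solution regularity from Remark \ref{rem1}: $\|\nabla d_i\|_{L^\infty(0,T;L^2)}\le C$, $\nabla d_i\in L^4(\mathbb R^2\times(0,T))$, $\|\nabla^2 d_i\|_{L^2(0,T;L^2)}<\infty$, and $\|\nabla\xi\|_{H^1}\le C\|u\|_2$ (so $\|\nabla\xi\|_4\le C\|\nabla\xi\|_2^{1/2}\|\nabla^2\xi\|_2^{1/2}$). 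In each product one places the difference factor ($\nabla d$ or $d$) in $L^2$ or $L^4$, the $d_i$-factors in $L^4$, and $\nabla\xi$ or $\nabla^2\xi$ in $L^4$ or $L^2$; Young's inequality then peels off $\varepsilon(|\nabla d|^2+|\nabla^2\xi|^2)$ and leaves an $L^1(0,T)$ weight times the lower-order energy. For the genuinely second-order pieces that weight is $\|\nabla^2 d_i\|_2$, giving $C_\varepsilon\int(|\nabla^2 d_1|+|\nabla^2 d_2|)(|d|^2+|\nabla\xi|^2)\,dx$ after one more H\"older step; for the remainder $\delta\int|\nabla d||\nabla d_1||\nabla\xi|\,dx$ one uses $\|\nabla d_i\|_4^4\le C\|\nabla d_i\|_2^2\|\nabla^2 d_i\|_2^2\le C\|\nabla^2 d_i\|_2^2$ to fold it into the same term (here $\delta\le1$ is enough). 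The $\sigma^E$ estimate needs no integration by parts since no second derivatives appear; the only slightly delicate term is the cubic $\int|d|\,|\nabla d_2|^2\,|\nabla\xi|\,dx$, controlled by $\|d\|_4\|\nabla d_2\|_4^2\|\nabla\xi\|_4$ and Ladyzhenskaya.

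The main obstacle — and the reason for the whole ``one order lower'' set-up — is exactly this bookkeeping: the difference $d=d_1-d_2$ is controlled only in $L^\infty(0,T;L^2)\cap L^2(0,T;H^1)$, so $\nabla^2 d$ of the difference is \emph{not} available, whereas $\nabla^2 d_i\in L^2(0,T;L^2)$ for the individual solutions is an acceptable $L^1(0,T)$ Gr\"onwall weight. Every manipulation must therefore be arranged so that second derivatives fall either on the individual $d_i$, or — after integration by parts — onto $\xi$, where $\nabla^2\xi$ is controlled by the $H^{-1}$-level coercivity $\tfrac{\mu_4}{4}\int(|\nabla\xi|^2+|\nabla^2\xi|^2)$ supplied by Lemma \ref{lem1}. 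Keeping the factor $\delta$ attached to $\int|\nabla d|\,|\nabla^2\xi|\,dx$ is essential because estimating $C_1\delta(|\lambda_2/\lambda_1|+1)\|\nabla d\|_2\|\nabla^2\xi\|_2\le\tfrac{\mu_4}{8}\|\nabla^2\xi\|_2^2+\tfrac{2C_1^2\delta^2(|\lambda_2/\lambda_1|+1)^2}{\mu_4}\|\nabla d\|_2^2$ leaves a multiple of $\|\nabla d\|_2^2$, which is a dissipation term in the director estimate (whose dissipation coefficient is $\tfrac{2a}{|\lambda_1|}$); this multiple must be made smaller than $\tfrac{2a}{|\lambda_1|}$, and it is this requirement, together with the analogous smallness needed directly in the director equation, that produces the threshold $\delta_0$ in Theorem \ref{theorem}.
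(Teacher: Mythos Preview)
Your overall strategy matches the paper's: split differences bilinearly, integrate by parts once on the $\nabla^2 d$-of-the-difference contributions in $\Pi^L$ to shift the derivative onto $\xi$ or onto a $d_1$-coefficient, and note that $\sigma^E$ contains no second derivatives. The identification of the constant $C_1(1+|\lambda_2/\lambda_1|)$ via Parodi's relation is also correct.

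The one substantive difference is how you handle the quadratic gradient remainders of the form $|\nabla d_i|^2$ (arising from $\partial V/\partial d$, from the product rule after integration by parts, and from the cubic term $\int|d|\,|\nabla d_2|^2|\nabla\xi|\,dx$ in $\sigma^E$). You route them through H\"older and Ladyzhenskaya, landing on norm products such as $\|\nabla^2 d_i\|_2^2\bigl(\|d\|_2^2+\|\nabla\xi\|_2^2\bigr)$. The paper instead uses the elementary pointwise identity coming from $|d_i|=1$,
\[
|\nabla d_i|^2=-d_i\cdot\Delta d_i\le|\Delta d_i|\le C|\nabla^2 d_i|,
\]
combined with the pointwise Cauchy inequality. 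This keeps everything under a single integral and yields \emph{exactly} the stated form $C_\varepsilon\int(|\nabla^2 d_1|+|\nabla^2 d_2|)(|d|^2+|\nabla\xi|^2)\,dx$. Your Ladyzhenskaya detour produces a bound that is equally usable for the Gr\"onwall step in Theorem~\ref{theorem} (the weight is still $L^1(0,T)$), but it does not match the lemma as written; so strictly speaking you have proven a close variant rather than the statement itself. Replacing each occurrence of $|\nabla d_i|^2$ by $|\Delta d_i|$ via the identity above, and applying Cauchy at the integrand level rather than in norms, fixes this with no extra work and also makes the argument shorter.
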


\begin{proof}
Since $V(d,\nabla d)$ is quadratic in $d$ and $\nabla d$, then $\frac{\partial V(d,\nabla d)}{\partial d}$ and $\frac{\partial V(d,\nabla d)}{\partial(\nabla d)}$ are linear in $d$ and $\nabla d$, respectively. Thus, we have
\begin{align*}
  \left|\frac{\partial V(d_1,\nabla d_1)}{\partial d}-\frac{\partial V(d_2,\nabla d_2)}{\partial d}\right|=&\left|\frac{\partial V(d_1,\nabla d_1)}{\partial d}-\frac{\partial V(d_1,\nabla d_2)}{\partial d}+\frac{\partial V(d,\nabla d_2)}{\partial d}\right|\\
  =&\left|\int_0^1\frac{\partial^2 V(d_1,\nabla d_2+\theta\nabla d)}{\partial(\nabla d)\partial d}:\nabla d d\theta+\frac{\partial V(d,\nabla d_2)}{\partial d}\right|\\
  \leq&C_1\delta[(|\nabla d_1|+|\nabla d_2|)|\nabla d|+|\nabla d_2|^2|d|]\\
  \leq&C_1\delta[(|\nabla d_1|+|\nabla d_2|)|\nabla d|+|\nabla^2 d_2||d|],
\end{align*}
where in the last inequality we have used the fact that
$|\nabla d_2|^2=-\Delta d_2\cdot d_2$. Similarly
\begin{equation}
\left|\frac{\partial V(d_1,\nabla d_1)}{\partial \nabla d}-\frac{\partial V(d_2,\nabla d_2)}{\partial \nabla d}\right|\leq C_1\delta(|\nabla d|+|\nabla d_2||d|),\label{bq2}
\end{equation}
for some absolute positive constant $C_1$.  With the aid of these estimates, we deduce that 
\begin{align}
  &\left|\int(H_1-H_2)\cdot fdx\right|\nonumber\\
  =&\left|\int\text{div}\left(\frac{\partial V(d_1,\nabla d_1)} {\partial \nabla d}-\frac{\partial V(d_2,\nabla d_2)}{\partial \nabla d}\right)\cdot fdx\right.\nonumber\\
  &\left.-\int\left(\frac{\partial V(d_1,\nabla d_1)}{\partial \nabla d} -\frac{\partial V(d_2,\nabla d_2)}{\partial \nabla d}\right)\cdot f dx\right|\nonumber\\
  =&\left|\int\left(\frac{\partial V(d_1,\nabla d_1)}{\partial \nabla d}-\frac{\partial V(d_2,\nabla d_2)}{\partial \nabla d}\right):\nabla fdx\right.\nonumber\\
  &\left.-\int\left(\frac{\partial V(d_1,\nabla d_1)}{\partial \nabla d} -\frac{\partial V(d_2,\nabla d_2)}{\partial \nabla d}\right)\cdot f dx\right|\nonumber\\
  \leq&C_1\delta\int\{(|\nabla d|+|\nabla d_2||d|)|\nabla f|+[|\nabla^2 d_2||d|+(|\nabla d_1|+|\nabla d_2|)|\nabla d|]|f|\}dx.\label{bq1}
\end{align}
Observe that the same inequality, as above, holds true for $\hat H$.

By straightforward calculations, and using relation (\ref{mus}), one has $$
\Pi_i^L:\nabla\xi=\hat H_i\cdot\left(\frac{\lambda_2}{\lambda_1}S-Q\right)\cdot\hat d_i-\frac{\lambda_2}{\lambda_1}H_i\cdot d_i(\hat d_i\cdot S\cdot \hat d_i).
$$
Therefore, noticing that $|H_2|\leq C_1\delta(|\nabla^2d_2|+|\nabla d_2|^2)\leq C_1\delta|\nabla^2d_2|$, and using (\ref{bq1}), it follows from the Cauchy-Schwarz inequality that
\begin{align*}
  &\left|\int(\Pi_1^L-\Pi_2^L):\nabla\xi dx\right|\\
  =&\left|\int\left[(\hat H_1-\hat H_2)\cdot\left(\frac{\lambda_2}{\lambda_1}S-Q\right)\cdot\hat d_1+\hat H_2\cdot\left(\frac{\lambda_2}{\lambda_1}S-Q\right)\cdot\hat d\right]dx\right.\\
  &\left.-\frac{\lambda_2}{\lambda_1}\int\{(H_1-H_2)\cdot d_1(\hat d_1\cdot S\cdot \hat d_1)+H_2\cdot[ d_1(\hat d_1\cdot S\cdot \hat d_1)-d_2(\hat d_2\cdot S\cdot \hat d_2)]\}dx\right|\\
  \leq& C_1\delta\left(1+\left|\frac{\lambda_2}{\lambda_1}\right|\right) \int\{(|\nabla d|+|\nabla d_2||d|)(|\nabla^2\xi|+|\nabla\xi||\nabla d_1|)\\
  &+[|\nabla^2 d_2||d|+(|\nabla d_1|+|\nabla d_2|)|\nabla d|]|\nabla \xi| \}dx\\
  \leq&C_1\delta\left(\left|\frac{\lambda_2}{\lambda_1}\right|+1\right)
  \int|\nabla d||\nabla^2\xi|dx+\varepsilon\int(|\nabla d|^2+|\nabla^2\xi|^2)dx\\
  &+C_\varepsilon\int(|\nabla^2d_1|+|\nabla^2d_2|)(|d|^2 +|\nabla\xi|^2)dx,
\end{align*}
which yields the first estimate in the lemma.

Similar to (\ref{bq2}), one has that
$$
\left|\frac{\partial W(d_1,\nabla d_1)}{\partial\nabla d}- \frac{\partial W(d_2,\nabla d_2)}{\partial\nabla d}\right|\leq C(|\nabla d|+|\nabla d_2||d|),
$$
for some positive constant $C$.
Thus,  
\begin{align*}
  |\sigma_1^E-\sigma_2^E|=&\left|(\nabla d_1)^T\frac{\partial W(d_1,\nabla d_1)}{\partial\nabla d}-(\nabla d_2)^T\frac{\partial W(d_2,\nabla d_2)}{\partial\nabla d}\right|\\
  =&\left|(\nabla d)^T\frac{\partial W(d_1,\nabla d_1)}{\partial\nabla d}-(\nabla d_2)^T\left(\frac{\partial W(d_1,\nabla d_1)}{\partial\nabla d}-\frac{\partial W(d_2,\nabla d_2)}{\partial\nabla d}\right)\right|\\
  \leq&C[|\nabla d||\nabla d_1|+|\nabla d_2|(|\nabla d|+|\nabla d_2||d|)]\\
  \leq&C(|\nabla d_1|+|\nabla d_2|)|\nabla d|+C|\Delta d_1||d|,
\end{align*}
from which, by the Cauchy inequality, the second conclusion follows.
\end{proof}

With this lemma in hand, we are now ready to prove Theorem \ref{theorem}.

\begin{proof}[\textbf{Proof of Theorem \ref{theorem}}]
Let $(u_1, d_1)$ and $(u_2, d_2)$ be two weak solutions to the Ericksen-Leslie system (\ref{MAIN1})--(\ref{MAIN3}). We adopt the same notations as those stated at the beginning of this section.

Recalling that $(u_i,\nabla d_i)\in L^\infty(0,T;L^2(\mathbb R^2))\cap L^2(0,T; H^1(\mathbb R^2))$, $i=1,2$, by the Ladyzhenskaya inequality, one can obtain easily that $(u_i,\nabla d_i)\in L^4(\mathbb R^2\times(0,T))$. It follows from this and (\ref{MAIN1}) that $\partial_tu\in L^2(0,T;H^{-1})$, and consequently
\begin{eqnarray}
\xi=(-\Delta+I)^{-1}u\in L^2(0,T; H^3),\quad \partial_t\xi=(-\Delta+I)^{-1}\partial_tu\in L^2(0,T;H^1).\label{EQQ1}
\end{eqnarray}
For $i=1,2$, set
$$
F(u_i, d_i)=\sigma_i^E+\sigma^L-u_i\otimes u_i.
$$
Then it is clear that
$$
\partial_tu_i=\text{div}\,F(u_i, d_i)-\nabla p_i.
$$
Consequently, one has that
$$
\partial_t\xi=\partial_t(I-\Delta)^{-1}u=(I-\Delta)^{-1}[\text{div}\,(F(u_1, d_1)-F(u_2, d_2))-\nabla p].
$$

It follows from this equation, (\ref{EQQ1}), and the fact that $\text{div}\,\xi=0$, that
\begin{align*}
  &\frac{1}{2}\frac{d}{dt}\int(|\xi|^2+|\nabla\xi|^2)dx\\
  =&\int(\xi\cdot \partial_t\xi+\nabla\xi:\nabla\partial_t\xi)dx=\int(\xi-\Delta\xi)\cdot\partial_t\xi dx\\
  =&\int(I-\Delta)\xi\cdot (I-\Delta)^{-1}[\text{div}\,(F(u_1, d_1)-F(u_2, d_2))-\nabla p]dx\\
  =&\int \xi\cdot  [\text{div}\,(F(u_1, d_1)-F(u_2, d_2))-\nabla p]dx\\
  =&-\int(F(u_1, d_1)-F(u_2, d_2)):\nabla\xi dx,
\end{align*}
where the integration by parts has been used. This, together with the expression of $F(u_i, d_i)$ and (\ref{E0}), leads to 
\begin{align*}
  &\frac{1}{2}\frac{d}{dt}\int(|\nabla\xi|^2+|\xi|^2)dx +\int(\Sigma_1^L-\Sigma_2^L):\nabla \xi dx\nonumber\\
  =&-\int(\sigma_1^E-\sigma_2^E+\Pi_1^L-\Pi_2^L-u_1\otimes u_1+u_2\otimes u_2):\nabla \xi dx.
\end{align*}

It follows from this, Lemma \ref{lem1} and Lemma \ref{lem2} that
\begin{align}
  &\frac{1}{2}\frac{d}{dt}\int(|\nabla\xi|^2+|\xi|^2)dx+\frac{\mu_4}{4}\int (|\nabla\xi|^2+|\nabla^2\xi|^2)dx\nonumber\\
  &-\frac{2a\lambda_2}{\lambda_1}\int(d\cdot d_1)\hat d_1\otimes \hat d_1:\Delta Sdx+2a\int\hat d\cdot\left(\frac{\lambda_2}{\lambda_1}\Delta S-\Delta Q\right)\cdot\hat d_1dx\nonumber\\
  \leq&-\int(\sigma_1^E-\sigma_2^E+\Pi_1^L-\Pi_2^L-u_1\otimes u_1+u_2\otimes u_2):\nabla \xi dx\nonumber\\
  &+C\int(|\Delta d_1|+|\nabla u_2|+|\Delta d_2|)(|d|^2+|\nabla\xi|^2)dx\nonumber\\
  \leq&C_1\delta\left(\left|\frac{\lambda_2}{\lambda_1}\right|
  +1\right) \int |\nabla d||\nabla^2\xi|dx+ \int[\varepsilon(|\nabla d|^2+|\nabla^2\xi|^2)+C_\varepsilon(|\nabla^2d_1| \nonumber\\
  &+|\nabla^2d_2| +|\nabla u_2|)(|d|^2+|\nabla\xi|^2)] dx +\int (|u_1|+|u_2|)(|\xi|+|\Delta\xi|)|\nabla\xi|dx\nonumber\\
  \leq&C_1\delta\left(\left|\frac{\lambda_2}{\lambda_1}\right|
  +1\right) \int |\nabla d||\nabla^2\xi|dx+2\varepsilon \int(|\nabla d|^2+|\nabla^2\xi|^2)dx+C_\varepsilon\int(1+|u_1|^2\nonumber\\
  &+|u_2|^2+|\nabla u_2|+|\nabla^2d_1|+|\nabla^2d_2|) (|d|^2+|\xi|^2+|\nabla\xi|^2)dx, \label{estxi}
\end{align}
for any positive number $\varepsilon,$ where $C_1$ is an absolute constant.

Subtracting the equations for $d_2$ from those for $d_1$, and recalling (\ref{h-}), by tedious but simple calculations, we can see that $d$ satisfies
\begin{align}
  \partial_td&+\frac{2a}{\lambda_1}\Delta d+\left(\frac{\lambda_2}{\lambda_1}A-\Omega\right)\cdot d_1-\frac{\lambda_2}{\lambda_1}(A:\hat d_1\otimes\hat d_1)d_1\nonumber\\
  =&-\frac{1}{\lambda_1}[H_1-H_2-(d_1\cdot H_1)d_1+(d_2\cdot H_2)d_2]+g,\label{d}
\end{align}
where $g$ is given by
\begin{align*}
  g=&-(u_1\cdot\nabla d+ u\cdot\nabla d_2)-\left(\frac{\lambda_2}{\lambda_1} A_2-\Omega_2\right)\cdot d -\frac{2a}{\lambda_1}(|\nabla d_1|^2d+\nabla d:\nabla(d_1+d_2)d_2) \\
  &-\frac{\lambda_2}{\lambda_1}[(A_2:\hat d_1\otimes\hat d_1)d_1-(A_2:\hat d_2\otimes\hat d_2)d_2].
\end{align*}
The second line of the expression for $g$ can be bounded by $C|A_2||d|\leq C|\nabla u_2||d|$. Thus, $g$ can be bounded as 
\begin{equation}
  |g|\leq C(|u_1|+|\nabla d_1|+|\nabla d_2|)(|u|+|\nabla d|)+(|\nabla u_2|+|\Delta d_1|)|d|,\label{bddg}
\end{equation}
where we have used the fact that $|\nabla d_1|^2\leq |\Delta d_1|$.

Multiplying equation (\ref{d}) by $2ad$, integrating over $\mathbb R^2$, and noticing that $$
|H_2|\leq C\delta(|\nabla^2d_2|+|\nabla d_2|^2)\leq C\delta|\nabla^2d_2|,$$ then we obtain from (\ref{bq1}), (\ref{bddg}) and the Cauchy inequality that
\begin{align}
&\frac{d}{dt}\int a|d|^2dx-\frac{4a^2}{\lambda_1}\int|\nabla d|^2dx
\nonumber\\
&-\frac{2a\lambda_2}{\lambda_1}\int(d\cdot d_1)(A:\hat d_1\otimes d_1)dx+2a\int d\cdot \left(\frac{\lambda_2}{\lambda_1}A-\Omega\right)\cdot d_1dx\nonumber\\
=&-\frac{2a}{\lambda_1}\int[H_1-H_2-(d_1\cdot H_1)d_1+(d_2\cdot H_2)d_2]ddx+2a\int gddx\nonumber\\
\leq&-\frac{2a}{\lambda_1}\int[(H_1-H_2)(I-d_1\otimes d_1)\cdot d+H_2\cdot(d_1\otimes d_1-d_2\otimes d_2)\cdot d]dx\nonumber\\
&+C\int[(|u_1|+|\nabla d_1|+|\nabla d_2|)(|\Delta\xi|+|\xi|+|\nabla d|)+(|\nabla u_2|+|\Delta d_1|)|d|]|d|dx\nonumber\\
\leq&-\frac{a C_1\delta}{\lambda_1}\int\{(|\nabla d|+|\nabla d_2||d|) (|\nabla d|+|d||\nabla d_1|)+[|\nabla^2d_2||d|\nonumber\\
&+(|\nabla d_1|+|\nabla d_2|)|\nabla d|]|d|\}dx+C\int|d|^2|\nabla^2 d_2|dx+C\int[(|u_1|+|\nabla d_1|\nonumber\\
&+|\nabla d_2|)(|\Delta\xi|+|\xi|+|\nabla d|)+(|\nabla u_2|+|\Delta d_1|)|d|]|d|dx\nonumber\\
\leq&-\frac{aC_1\delta}{\lambda_1}\int |\nabla d|^2 dx +\varepsilon \int(|\nabla d|^2+|\Delta\xi|^2)dx +C_\varepsilon\int(1+|u_1|^2\nonumber\\
&+|\nabla u_2|+|\nabla^2d_1|+|\nabla^2d_2|)(|\xi|^2+|d|^2)dx, \label{estd}
\end{align}
for any positive number $\varepsilon$, where $C_1$ is an absolute constant.

Summing (\ref{estxi}) with (\ref{estd}) up, and recalling (\ref{AOMEGA}), we obtain
\begin{align*}
  &\frac{1}{2}\frac{d}{dt}\int(|\xi|^2+|\nabla\xi|^2+2a|d|^2)dx+\int\left( \frac{\mu_4}{4}(|\nabla\xi|^2+|\nabla^2\xi|^2)-\frac{4a^2}{\lambda_1}
  |\nabla d|^2\right)dx\nonumber\\
  \leq&\frac{2a\lambda_2}{\lambda_1}\int(d\cdot d_1)\hat d_1\otimes \hat d_1:Sdx-2a\int\hat d\cdot\left(\frac{\lambda_2}{\lambda_1}  S- Q\right)\cdot\hat d_1dx\nonumber\\
  &+C_1\delta\left(\left|\frac{\lambda_2}{\lambda_1}\right|
  +1\right) \int |\nabla d||\nabla^2\xi|dx-\frac{aC_1\delta}{\lambda_1}\int|\nabla d|^2dx+3\varepsilon \int(|\nabla d|^2+|\nabla^2\xi|^2)dx\nonumber\\
  &+C_\varepsilon\int(1+|u_1|^2+|u_2|^2+|\nabla u_2|+|\nabla^2d_1|+|\nabla^2d_2|) (|d|^2+|\xi|^2+|\nabla\xi|^2)dx\nonumber\\
  \leq&C_1\delta\left(\left|\frac{\lambda_2}{\lambda_1}\right|
  +1\right) \int |\nabla d||\nabla^2\xi|dx-\frac{aC_1\delta}{\lambda_1}\int|\nabla d|^2dx+3\varepsilon \int(|\nabla d|^2+|\nabla^2\xi|^2)dx\nonumber\\
  &+C_\varepsilon\int(1+|u_1|^2+|u_2|^2+|\nabla u_2|+|\nabla^2d_1|+|\nabla^2d_2|) (|d|^2+|\xi|^2+|\nabla\xi|^2)dx,
\end{align*}
for any positive number $\varepsilon$, from which, taking $\varepsilon=
\frac{1}{6}\min\left\{\frac{\mu_4}{4},-\frac{4a^2}{\lambda_1}\right\}$, we arrive at
\begin{align}
&\frac{d}{dt}\int(|\xi|^2+|\nabla\xi|^2+2a|d|^2)dx+\int\left( \frac{\mu_4}{4}(|\nabla\xi|^2+|\nabla^2\xi|^2)-\frac{4a^2}{\lambda_1}
|\nabla d|^2\right)dx\nonumber\\
\leq&C\int(1+|u_1|^2+|u_2|^2+|\nabla u_2|+|\nabla^2d_1|+|\nabla^2d_2|) (|d|^2+|\xi|^2+|\nabla\xi|^2)dx\nonumber\\
&+C_0\delta\left(\left|\frac{\lambda_2}{\lambda_1}\right|
+1\right) \int |\nabla d||\nabla^2\xi|dx-\frac{aC_0\delta}{\lambda_1}\int|\nabla d|^2dx,\label{bq3}
\end{align}
where $C_0$ is an absolute positive constant.

Set
$$
\delta_0=\min\left\{1,\frac{|\lambda_1|}{|\lambda_1| +|\lambda_2|}\sqrt{\frac{\mu_4}{-2\lambda_1}}\right\}\frac{a}{C_0},
$$
and suppose that $\delta\leq\delta_0$. Then, it follows from the Cauchy inequality that
\begin{align*}
&C_0\delta\left(\left|\frac{\lambda_2}{\lambda_1}\right|
+1\right) \int |\nabla d||\nabla^2\xi|dx-\frac{aC_0\delta}{\lambda_1}\int|\nabla d|^2dx\\
\leq&C_0\delta_0\left(\left|\frac{\lambda_2}{\lambda_1}\right|
+1\right) \int |\nabla d||\nabla^2\xi|dx-\frac{aC_0\delta_0}{\lambda_1}\int|\nabla d|^2dx\\
\leq&a\sqrt{\frac{\mu_4}{-2\lambda_1}}\int|\nabla d||\nabla^2\xi|dx-\frac{a^2}{\lambda_1}\int|\nabla d|^2dx\\
=&\int\sqrt{\frac{\mu_4}{4}}|\nabla^2\xi|\sqrt{\frac{2}{-\lambda_1}} a|\nabla d|dx-\frac{a^2}{\lambda_1}\int|\nabla d|^2dx\\
\leq&\frac{1}{2}\int\left(\frac{\mu_4}{4}|\nabla^2\xi|^2-\frac{2a^2}{\lambda_1} |\nabla d|^2\right)dx-\frac{a^2}{\lambda_1}\int|\nabla d|^2dx\\
=&\int\left(\frac{\mu_4}{8}|\nabla^2\xi|^2-\frac{2a^2}{\lambda_1} |\nabla d|^2\right)dx,
\end{align*}
and consequently, by (\ref{bq3}), we have
\begin{align*}
&\frac{d}{dt}\int(|\xi|^2+|\nabla\xi|^2+2a|d|^2)dx+\int\left( \frac{\mu_4}{8}(|\nabla\xi|^2+|\nabla^2\xi|^2)-\frac{2a^2}{\lambda_1}
|\nabla d|^2\right)dx\nonumber\\
\leq&C\int(1+|u_1|^2+|u_2|^2+|\nabla u_2|+|\nabla^2d_1|+|\nabla^2d_2|) (|d|^2+|\xi|^2+|\nabla\xi|^2)dx:=J,
\end{align*}

By the H\"older, Ladyzhenskaya and Cauchy inequalities, we can estimate $J$ as
\begin{align*}
  J\leq&C(\|u_1\|_4^2+\|u_2\|_4^2+\|\nabla u_2\|_2+\|\Delta d_1\|_2+\|\Delta d_2\|_2)(\|d\|_4+\|\xi\|_4+\|\nabla\xi\|_4)^2\\
  &+C(\|d\|_2^2+\|\xi\|_2^2+\|\nabla\xi\|_2^2)\\
  \leq&C(\|u_1\|_4^2+\|u_2\|_4^2+\|\nabla u_2\|_2+\|\Delta d_1\|_2+\|\Delta d_2\|_2)(\|d\|_2+\|\xi\|_2+\|\nabla\xi\|_2)\\
  &\times(\|\nabla d\|_2+\|\nabla \xi\|_2+\|\nabla^2\xi\|_2)+C(\|d\|_2^2+\|\xi\|_2^2+\|\nabla\xi\|_2^2)\\
  \leq&\varepsilon(\|\nabla d\|_2^2+\|\nabla \xi\|_2^2+\|\nabla^2\xi\|_2^2)+C(1+\|u_1\|_4^4+\|u_2\|_4^4+\|\nabla u_2\|_2^2\\
  &+\|\Delta d_1\|_2^2+\|\Delta d_2\|_2^2)(\|d\|_2^2+\|\xi\|_2^2+\|\nabla\xi\|_2^2),
\end{align*}
with $\varepsilon= \min\left\{\frac{\mu_4}{16},-\frac{a^2}{\lambda_1}\right\}$. Therefore, we obtain that
\begin{align*}
\frac{d}{dt}(2a\|d\|_2^2+\|\xi\|_2^2+\|\nabla\xi\|_2^2)&+c_0(\|\nabla\xi\|_2^2+ \|\nabla^2\xi\|_2^2+\|\nabla d\|_2^2)\\
\leq& Cm(t)(\|d\|_2^2+\|\xi\|_2^2+\|\nabla\xi\|_2^2),
\end{align*}
where $c_0=\min\left\{\frac{\mu_4}{16},-\frac{a^2}{\lambda_1}\right\}$ and
$$
m(t)=(1+\|u_1\|_4^4+\|u_2\|_4^4+\|\nabla u_2\|_2^2
+\|\Delta d_1\|_2^2+\|\Delta d_2\|_2^2)(t).
$$
Recall that $u_i,\nabla d_i\in L^\infty(0,T;L^2)\cap L^2(0,T;H^1)$, which, by the Ladyzhenskaya inequality, implies $u_i\in L^4(0,T; L^4)$, and thus $m\in L^1((0,T))$. Then, the continuous dependence on the initial data follows from the above inequality by the Gronwall's.
\end{proof}

\section{Appendix: proof of Lemma \ref{lem1}}

Before proving Lemma \ref{lem1}, we introduce some notations. For arbitrary $(u,d)$, denote by $A$, as before, the symmetric part of $\nabla u$, and by $\hat d=(d^1, d^2)$. Let $\sigma^L$ be the Leslie stress tensor associated to $(u,d)$. Recalling the expression for $\Sigma^L$ from (\ref{E1}), we can decompose it as
\begin{equation}
\Sigma^L=\mathscr A+\mathscr B+\mathscr C, \label{sigmaLABC}
\end{equation}
where $\mathscr A, \mathscr B$ and $\mathscr C$ are given by
\begin{eqnarray}
  \mathscr A&=&\left(\mu_1-\frac{\lambda_2^2}{\lambda_1}\right)(\hat d\cdot A\cdot\hat d)\hat d\otimes\hat d+\mu_4A\nonumber\\
  &&+\left(\mu_5-\frac{\lambda_2}{\lambda_1}\mu_2\right)(A\cdot\hat d)\otimes\hat d+\left(\mu_6-\frac{\lambda_2}{\lambda_1}\mu_3\right)\hat d\otimes(A\cdot\hat d),\label{A}\\
  \mathscr B&=&-\frac{2a}{\lambda_1}(\mu_2\Delta\hat d\otimes\hat d+\mu_3\hat d\otimes\Delta\hat d),\qquad\mathscr C=-\frac{2a\lambda_2}{\lambda_1}(\Delta d\cdot d)\hat d\otimes\hat d.\label{BC}
\end{eqnarray}

For any $2\times2$ matrix $M$, denote by $M_s$ and $M_a$ the symmetric and skew-symmetric parts of $M$, respectively, that is
$$
M_s=\frac{1}{2}(M+M^T),\quad M_a=\frac{1}{2}(M-M^T).
$$
It follows from(\ref{mus}) that
\begin{eqnarray*}
  \mathscr A:M&=&\left(\mu_1-\frac{\lambda_2^2}{\lambda_1}\right)(\hat d\cdot A\cdot\hat d)\hat d\otimes\hat d:M_s+\mu_4A:M_s\\
  &&+\left(\mu_5+\mu_6-\frac{\lambda_2}{\lambda_1}(\mu_2+\mu_3)\right)(A\cdot\hat d)\otimes\hat d:M_s\\
  &&+\left(\mu_5-\mu_6-\frac{\lambda_2}{\lambda_1}(\mu_2-\mu_3)\right) (A\cdot\hat d)\otimes \hat d:M_a\\
  &=&\alpha(\hat d\cdot A\cdot\hat d)\hat d\otimes\hat d:M_s+\mu_4A:M_s+\beta(A\cdot\hat d)\otimes\hat d:M_s\\
  &=&\alpha\mathscr H:M_s+\beta\mathscr M:M_s+\mu_4A:M_s,
\end{eqnarray*}
and
\begin{eqnarray*}
  \mathscr B:M&=&-\frac{2a}{\lambda_1}[(\mu_2+\mu_3)\Delta\hat d\otimes\hat d:M_s+(\mu_2-\mu_3)\Delta\hat d\otimes\hat d:M_a]\\
  &=&\frac{2a\lambda_2}{\lambda_1}\Delta\hat d\otimes\hat d:M_s-2a\Delta\hat d\otimes\hat d:M_a\\
  &=&2a\Delta\hat d\otimes\hat d:\left(\frac{\lambda_2}{\lambda_1}M_s-M_a\right),
\end{eqnarray*}
where the constants $\alpha$ and $\beta$, and matrices $\mathscr H$ and $\mathscr M$ are given by
\begin{eqnarray*}
  &&\alpha=\mu_1-\frac{\lambda_2^2}{\lambda_1}, \quad\beta=\mu_5+\mu_6+\frac{\lambda_2^2}{\lambda_1},\nonumber\\
  &&\mathscr H=(\hat d\cdot A\cdot\hat d)\hat d\otimes\hat d,\quad\mathscr M= (A\cdot\hat d)\otimes\hat d.\label{HM}
\end{eqnarray*}
Consequently, (\ref{sigmaLABC}) yields 
\begin{eqnarray}
  \Sigma^L:M&=&(\mathscr A+\mathscr B+\mathscr C):M\nonumber\\
  &=&(\alpha\mathscr H+\beta\mathscr M+\mu_4A+\mathscr C):M_s+2a\Delta\hat d\otimes\hat d: \left(\frac{\lambda_2}{\lambda_1}M_s-M_a\right),\label{sigmaLM}
\end{eqnarray}
for any $2\times2$ matrix $M$.

We are now ready to prove Lemma \ref{lem1}.

\begin{proof}[\textbf{Proof of Lemma \ref{lem1}}]
Let $(u_1, d_1)$ and $(u_2, d_2)$ be two weak solutions to the Ericksen-Leslie system (\ref{MAIN1})--(\ref{MAIN3}), and set $u=u_1-u_2$ and $d=d_1-d_2$. We adopt the same notations introduced above, and add the subscript to distinct them (of course, no subscript is required for those associated to $(u,d)$).

We have the following
\begin{eqnarray}
  \mathscr H_1-\mathscr H_2&=&(\hat d_1\cdot A_1\cdot\hat d_1)\hat d_1\otimes\hat d_1-(\hat d_2\cdot A_2\cdot\hat d_2)\hat d_2\otimes\hat d_2
  \nonumber\\
  &=&(A_1:\hat d_1\otimes\hat d_1)\hat d_1\otimes\hat d_1-(A_2:\hat d_2\otimes\hat d_2)\hat d_2\otimes\hat d_2\nonumber\\
  &=&(A:\hat d_1\otimes\hat d_1)\hat d_1\otimes\hat d_1+ {\mathscr H}_r,\label{n1}\\
  \mathscr M_1-\mathscr M_2&=&(A_1\cdot\hat d_1)\otimes\hat d_1-(A_2\cdot\hat d_2)\otimes\hat d_2=(A\cdot\hat d_1)\otimes\hat d_1+{\mathscr M}_r,\label{n2}\\
  \mathscr C_1-\mathscr C_2&=&-\frac{2a\lambda_2}{\lambda_1}[(\Delta d_1\cdot d_1)\hat d_1\otimes\hat d_1-(\Delta d_2\cdot d_2)\hat d_2\otimes\hat d_2]\nonumber\\
  &=&-\frac{2a\lambda_2}{\lambda_1}(\Delta d\cdot d_1)\hat d_1\otimes\hat d_1+ {\mathscr C}_r,\label{n3}
\end{eqnarray}
where $ {\mathscr H}_r, {\mathscr M}_r$ and ${\mathscr C}_r$ are given by
\begin{eqnarray*}
 {\mathscr H}_r&=&(A_2:\hat d_1\otimes\hat d_1)\hat d_1\otimes\hat d_1-(A_2:\hat d_2\otimes\hat d_2)\hat d_2\otimes\hat d_2,\label{HR}\\
 {\mathscr M}_r&=&(A_2\cdot\hat d_1)\otimes\hat d_1-(A_2\cdot\hat d_2)\otimes\hat d_2,\label{MR}\\
 {\mathscr C}_r&=&-\frac{2a\lambda_2}{\lambda_1}[(\Delta d_2\cdot d_1)\hat d_1\otimes\hat d_1-(\Delta d_2\cdot d_2)\hat d_2\otimes\hat d_2].\label{CR}
\end{eqnarray*}

Since $|d_i|=1$, one can check easily that
\begin{equation}
  |{\mathscr H}_r|\leq4|A_2||\hat d|,\quad|{\mathscr M}_r|\leq2|A_2||\hat d|,\label{hm}
\end{equation}
and
\begin{equation}
  |{\mathscr C}_r|\leq2a\left|\frac{\lambda_2}{\lambda_1}\right||\Delta d_2|(|d|+2|\hat d|)\leq6a\left|\frac{\lambda_2}{\lambda_1}\right||\Delta d_2||d|.\label{c}
\end{equation}

Recall that $S$ and $Q$ are the symmetric and skew-symmetric parts of $\nabla\xi$, respectively. Thus, it follows from (\ref{sigmaLM}), and (\ref{n1})--(\ref{n3}) that
\begin{align*}
  &\int(\Sigma_1^L-\Sigma_2^L):\nabla\xi dx\\
  =&\int(\alpha(\mathscr H_1-\mathscr H_2)+\beta(\mathscr M_1-\mathscr M_2)+\mu_4A+\mathscr C_1-\mathscr C_2):Sdx\\
  &+2a\int(\Delta\hat d_1\otimes\hat d_1-\Delta\hat d_2\otimes\hat d_2)
  :\left(\frac{\lambda_2}{\lambda_1}S-Q\right)dx\\
  =&\int[\alpha(A:\hat d_1\otimes\hat d_1)\hat d_1\otimes\hat d_1+\beta(A\cdot\hat d_1)\otimes\hat d_1+\mu_4A]:Sdx\nonumber\\
   &-\frac{2a\lambda_2}{\lambda_1}\int(\Delta d\cdot d_1)\hat d_1\otimes\hat d_1:Sdx+2a\int(\Delta\hat d\otimes\hat d_1):\left(\frac{\lambda_2}{\lambda_1}S-Q\right)dx\nonumber\\
   &+\int\left[2a(\Delta\hat d_2\otimes\hat d):\left(\frac{\lambda_2}{\lambda_1}S-Q\right)+(\alpha {\mathscr H}_r+\beta {\mathscr M}_r+ {\mathscr C}_r):S\right]dx\\
   =:&I_1+I_2+I_3+I_4.
\end{align*}

Next, we estimate the terms $I_i, i=1,2,3,4$, respectively.
Noticing that $\alpha,\beta\geq0$, and that $A=-\Delta S+S$; integration by parts yields
\begin{align*}
  I_1=&-\int[\alpha(\Delta S:\hat d_1\otimes\hat d_1)\hat d_1\otimes\hat d_1+\beta(\Delta S\cdot\hat d_1)\otimes\hat d_1+\mu_4\Delta S]:Sdx\\
  &+\int[\alpha(S:\hat d_1\otimes\hat d_1)\hat d_1\otimes\hat d_1+\beta(S\cdot\hat d_1)\otimes\hat d_1+\mu_4S]:Sdx\\
  =&\int[\alpha(\partial_k S:\hat d_1\otimes\hat d_1)\hat d_1\otimes\hat d_1+\beta(\partial_k S\cdot\hat d_1)\otimes\hat d_1+\mu_4\partial_k S]:\partial_kSdx\\
  &+\int[\alpha(\partial_k S:\partial_k (\hat d_1\otimes\hat d_1))\hat d_1\otimes\hat d_1+
  \alpha(\partial_k S:\hat d_1\otimes\hat d_1)\partial_k (\hat d_1\otimes\hat d_1)\\
  &+\beta(\partial_k S\cdot\partial_k\hat d_1)\otimes\hat d_1
  +\beta(\partial_k S\cdot\hat d_1)\otimes\partial_k\hat d_1]:Sdx\\
  &+\int[\alpha(S:\hat d_1\otimes\hat d_1)\hat d_1\otimes\hat d_1+\beta(S\cdot\hat d_1)\otimes\hat d_1+\mu_4S]:Sdx\\
  \geq&\mu_4\int(|S|^2+|\nabla S|^2)dx-(4\alpha+2\beta)\int|\nabla S||\nabla\hat d_1||S|dx,
\end{align*}
and thus, by (\ref{int}), and noticing that $|S|\leq|\nabla\xi|$ and $|\nabla S|\leq|\nabla^2\xi|$, we arrive at
$$
I_1\geq \frac{\mu_4}{2}\int(|\nabla\xi|^2+|\nabla^2\xi|^2)dx
  -C\int|\nabla \xi||\nabla^2\xi||\nabla d_1|dx.
$$
Integration by parts, and noticing that $|\nabla d_1|^2\leq|\Delta d_1|$, one gets that 
\begin{align*}
  I_2=&-\frac{2a\lambda_2}{\lambda_1}\int(\Delta d\cdot d_1)\hat d_1\otimes\hat d_1:Sdx\\
  =&-\frac{2a\lambda_2}{\lambda_1}\int\big\{(d\cdot d_1)[\hat d_1\otimes\hat d_1:\Delta S+2\partial_k(\hat d_1\otimes\hat d_1):\partial_kS+\Delta(\hat d_1\otimes\hat d_1):S]\\
  &+2(d\cdot\partial_kd_1)\partial_k(\hat d_1\otimes\hat d_1:S)+(d\cdot\Delta d_1)\hat d_1\otimes\hat d_1:S\big\}dx\\
  \geq&-\frac{2a\lambda_2}{\lambda_1}\int(d\cdot d_1)\hat d_1\otimes\hat d_1:\Delta Sdx-C\int(|\nabla d_1||\nabla S|+|\Delta d_1||S|)|d|dx\\
  \geq&-\frac{2a\lambda_2}{\lambda_1}\int(d\cdot d_1)\hat d_1\otimes\hat d_1:\Delta Sdx-C\int(|\nabla d_1||\nabla^2\xi|+|\Delta d_1||\nabla\xi|)|d|dx.
\end{align*}
For $I_3$, integration by parts yields
\begin{align*}
  I_3=&2a\int\Delta\hat d\otimes\hat d_1:\left(\frac{\lambda_2}{\lambda_1}S-Q\right)dx=2a\int\Delta\hat d\cdot\left(\frac{\lambda_2}{\lambda_1}S-Q\right)\cdot\hat d_1dx\\
  =&2a\int\left[\hat d\cdot\left(\frac{\lambda_2}{\lambda_1}\Delta S-\Delta Q\right)
  \cdot\hat d_1+2\hat d\cdot\partial_k\left(\frac{\lambda_2}{\lambda_1}S-Q\right)\cdot\partial_k\hat d_1\right.\\
  &\left.+\hat d\cdot\left(\frac{\lambda_2}{\lambda_1}S-Q\right)\cdot\Delta\hat d_1\right]dx\\
  \geq&2a\int\hat d\cdot\left(\frac{\lambda_2}{\lambda_1}\Delta S-\Delta Q\right)
  \cdot\hat d_1dx-C\int|d|(|\nabla d_1||\nabla^2\xi|+|\nabla\xi||\Delta d_1|)dx.
\end{align*}
Finally, due to (\ref{hm}) and (\ref{c}), $I_4$ can be estimated as
$$
I_4\geq-C\int(|\nabla u_2|+|\Delta d_2|)|d||\nabla\xi|dx.
$$

On account of the estimates for $I_1,I_2,I_3,I_4$, by the Cauchy inequality, and using $|\nabla d_1|^2\leq|\Delta d_1|$, we deduce
\begin{align*}
  &\int(\Sigma_1^L-\Sigma_2^L):\nabla\xi dx=I_1+I_2+I_3+I_4\nonumber\\
  \geq&\frac{\mu_4}{2}\int(|\nabla\xi|^2+|\nabla^2\xi|^2)dx -\frac{2a\lambda_{2}}{\lambda_1}\int(d\cdot d_1)\hat d_1\otimes\hat d_1:\Delta Sdx\nonumber\\
  &+2a\int\hat d\cdot\left(\frac{\lambda_2}{\lambda_1}\Delta S-\Delta Q\right)\cdot\hat d_1dx-C\int[|\nabla\xi||\nabla^2\xi||\nabla d_1|\nonumber\\
  &+|\nabla d_1||\nabla^2\xi||d|+|\Delta d_1||\nabla\xi||d|+(|\nabla u_2|+|\Delta d_2|)|d||\nabla\xi|] dx \nonumber\\
  \geq&\frac{\mu_4}{4}\int(|\nabla\xi|^2+|\nabla^2\xi|^2)dx -\frac{2a\lambda_{2}}{\lambda_1}\int(d\cdot d_1)\hat d_1\otimes\hat d_1:\Delta Sdx\nonumber\\
  &+2a\int\hat d\cdot\left(\frac{\lambda_2}{\lambda_1}\Delta S-\Delta Q\right)\cdot\hat d_1dx-C\int(|\Delta d_1|+|\nabla u_2|\nonumber\\
  &+|\Delta d_2|)(|d|^2 +|\nabla\xi|^2)dx,
\end{align*}
which proves Lemma \ref{lem1}.
\end{proof}

\section*{Acknowledgments}
{E.S.T. is thankful to the kind hospitality of the Institute for Pure and Applied Mathematics (IPAM), where part of this work was completed.
 The work of E.S.T.~is supported in part by the NSF
grants DMS-1109640 and DMS-1109645. The work of Z.X.~is supported
partially by Zheng Ge Ru Foundation, Hong Kong RGC Earmarked
Research Grants CUHK4041/11P and CUHK4048/13P, a Focus Area Grant from The
Chinese University of Hong Kong, and a grant from Croucher Foundation.}
\par

\end{document}